\titleformat{\section}[runin]{\bfseries\filcenter}{\thesection}{1em}{}
\renewcommand{\thesection}{\arabic{section}}
\title{\large \bf Finite groups with specific number of cyclic subgroups }
{

{
\author{\small \bf Hemant Kalra  \\
\small \em School of Mathematics \\
\small \em Thapar University, Patiala - 147 004,
India\\
\small E-mail:happykalra26@gmail.com\\
}}
\date{}

\newtheorem{thm}{Theorem}[section]
\newtheorem{lm}[thm]{Lemma}
\newtheorem{pp}[thm]{Proposition}

\newtheorem{cor}[thm]{Corollary}
\newtheorem{rem}[thm]{Remark}

\begin{document}

\maketitle
\begin{abstract}
In this note, we classify all finite groups having exactly $6$, $7$ or $8$ cyclic subgroups. This gives a partial answer to the open problem posed by T\u arn\u auceanu (Amer. Math. Monthly, {\bf 122} (2015), 275-276). As a consequence of our results, we also obtain an important result concerning  with the converse of Lagrange's theorem.

\end{abstract}
\vspace{2ex}

\noindent {\bf 2010 Mathematics Subject Classification:} 20D60, 20D20.

\vspace{2ex}

\noindent {\bf Keywords:} Finite group, Cyclic subgroup

\section{Introduction} Let $G$ be a finite group and let $c(G)$ denote the set of all cyclic subgroups of $G$. In this note, we work on a basic problem of  group theory. It is always interesting and worthwhile to classify finite groups with specific properties.  Recently, T\u arn\u auceanu ${\cite{tar}}$ proved that a finite group $G$ has $|G|-1$ cyclic subgroups if and only if $G$ is one of the $C_3, C_4, S_3$ or $D_8$. In the same paper, he posed an open problem:\\

\noindent {\bf{ Open problem}}. Describe the finite groups $G$ satisfying $|c(G)|=|G|-r$, where $2\le r\le |G|-1$.\\

Motivated by his work, Zhou \cite{zho} found all finite groups $G$ with $|c(G)|$ equal to 3, 4 or 5. In this note, we  give a partial answer to the above posed problem and classify all finite groups $G$ satisfying $|c(G)|=|G|-r$, where $r=|G|-6, |G|-7$ or $r=|G|-8$. Equivalently, we classify all finite groups $G$ having exactly $6$, $7$ or $8$ cyclic subgroups. As a consequence, we also give a very short and easy proof  of the main result of Zhou \cite{zho}. In Section 3, we collect some basic results. We compute number of cyclic subgroups of $G$ whenever order of $G$ is $pq$ or $p^2q$, where $p$ and $q$ are primes. As a consequence, we also prove that $A_4,$ (the group of all even permutations on four symbols) is the only  group of the order $p^2q$, where $p<q$, for which the converse of Lagrange's theorem is not true. We, also compute the order of $c(G)$, when $G$ is a finite $p$-group of order less or equal to $p^4$. Using these results as building blocks, we derive our main results in Section 4 which classifies all finite groups $G$ with $|c(G)|=6$, $7$ or $8$. 

\section{Notations and Preliminary Lemmas} 
For any positive integer $n$, let  $d(n)$ denote the number of positive divisors $n$ and $\phi(n)$ denote the number of positive integers less than and relative prime to $n$ respectively;  by $C_n$ we mean  a cyclic group of order $n$. For a prime $p$, $n_p$ denote the number of Sylow $p$-subgroups of the group $G$. Observe that, if $G$ is a finite elementary abelian group of order $p^n$, then $G$ has $p^{n-1}+p^{n-2}+\ldots +p+2$ cyclic subgroups.  The following result is due to  Richards \cite[Theorem]{ric}. We use this result quite frequently in the proofs of Section 4 without any further reference.

\begin{lm}
Let $G$ be a  group of order $n$. The number of cyclic subgroups of $G$ is greater than or equal to $d(n)$. Furthermore, the number of cyclic subgroups of $G$ is $d(n)$ if and only if $G$ is cyclic. 
\end{lm}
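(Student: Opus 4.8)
The plan is to convert the statement into a count weighted by element orders and then invoke Frobenius's theorem on the number of solutions of $x^d=1$.

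First I would dispose of the easy half of the equality assertion: if $G=C_n$ is cyclic, then for each divisor $d$ of $n$ it has exactly one subgroup of order $d$ and every subgroup is cyclic, so $|c(G)|=d(n)$. For the general inequality I would record the identity $|c(G)|=\sum_{d\mid n} a_d/\phi(d)$, where $a_d$ denotes the number of elements of $G$ of order $d$. The point is that a cyclic subgroup of order $d$ has exactly $\phi(d)$ generators, an element of order $d$ generates a unique cyclic subgroup, and distinct cyclic subgroups of order $d$ share no generator of order $d$; hence the number of cyclic subgroups of order $d$ equals the nonnegative integer $a_d/\phi(d)$. By Lagrange the only possible orders are divisors of $n$, so summing gives the identity, together with the constraint $\sum_{d\mid n}a_d=|G|=n$.

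Writing $b_d=a_d/\phi(d)\ge 0$, the inequality to be proved becomes $\sum_{d\mid n}(b_d-1)\ge 0$. If this were to fail, then some orders would be absent (those $d$ with $b_d=0$) and the total deficit coming from the absent orders would have to exceed the surplus contributed by the orders with $b_d\ge 2$. To rule this out I would use the theorem of Frobenius: for every $d\mid n$ the quantity $N_d=|\{x\in G:x^d=1\}|=\sum_{e\mid d}a_e$ is divisible by $d$, and being positive it satisfies $N_d\ge d$. Applied to an absent order $d$ (so $a_d=0$), the ``missing'' generators of order $d$ must still be compensated, since $N_d$ remains a multiple of $d$; this forces extra elements of proper-divisor order, i.e. additional cyclic subgroups of smaller order.

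The hard part will be turning these per-divisor Frobenius congruences, imposed simultaneously for all $d\mid n$, into the single clean inequality $\sum_{d\mid n}b_d\ge d(n)$: one must show that the accumulated surplus always dominates the accumulated deficit. This is a purely arithmetic argument about the integers $a_d$ subject to the divisibilities $d\mid N_d$, and it is where essentially all of the work lies. Finally, for the equality statement the cyclic case is already settled; conversely, if $G$ is not cyclic then it has no element of order $n$, so $b_n=0$, and feeding the top divisor $n$ into the compensation argument produces a strict surplus, giving $|c(G)|>d(n)$. Hence $|c(G)|=d(n)$ exactly when $G$ is cyclic.
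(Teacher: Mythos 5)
Your setup assembles the right ingredients, and in fact the paper itself offers no proof to compare against --- it quotes this lemma as Richards' theorem \cite{ric} --- so your attempt must stand on its own as a proof of a known but nontrivial result. It does not. The identity $|c(G)|=\sum_{d\mid n}a_d/\phi(d)$, the computation for cyclic $G$, and Frobenius' divisibility $d\mid N_d$ are all correct, but the entire content of the theorem is the passage from the family of congruences $d\mid N_d$ (for all $d\mid n$) to the inequality $\sum_{d\mid n}a_d/\phi(d)\ge d(n)$, and you explicitly defer this (``the hard part \ldots where essentially all of the work lies''). The ``compensation'' heuristic, as stated, is far too weak to be accepted as that argument: if $a_d=0$, then the $N_d\ge d$ solutions of $x^d=1$ lie in cyclic subgroups whose orders are proper divisors of $d$, each containing at most $d/p$ elements ($p$ the least prime factor of $d$), so a union bound yields only $\sum_{e\mid d,\,e<d}a_e/\phi(e)\ge p$ --- nowhere near the $d(d)$ subgroups one must produce below $d$ --- and a naive induction over the divisor lattice runs into sign problems under inclusion--exclusion. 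The converse direction inherits the same gap, since the ``strict surplus'' at the top divisor again invokes the missing argument.

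For the record, your plan can be completed, which shows the gap is one of execution rather than of conception. M\"obius inversion of $N_e=\sum_{f\mid e}a_f$ gives
\[
\sum_{d\mid n}\frac{a_d}{\phi(d)}=\sum_{e\mid n}w_e\,N_e,
\qquad
w_e=\sum_{m\mid n/e}\frac{\mu(m)}{\phi(em)},
\]
and factoring $w_e$ over primes shows $w_e\ge 0$ for every $e$ (the only factors that can vanish are $1-\tfrac{1}{p-1}$ at $p=2$; all others are positive). Applying the same identity to $C_n$, where $N_e=e$, gives $d(n)=\sum_{e\mid n}w_e\,e$, so Frobenius' bound $N_e\ge e$, used termwise, yields $|c(G)|\ge d(n)$. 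The equality case then still needs an extra observation: equality forces $N_e=e$ only for those $e$ with $w_e>0$; for the remaining (odd) divisors one notes that $N_e$ is odd when $e$ is odd, because the solution set of $x^e=1$ is closed under inversion with the identity as its only fixed point, whence $N_e=e$ there as well, and $N_e=e$ for all $e\mid n$ forces an element of order $n$. None of this appears in your proposal, so as it stands it is a plan for a proof, not a proof.
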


\begin{lm}
Let $G$ be a finite group and let $n$ divides the order of $G$. Then the number of cyclic subgroups of order $n$ is equal to $\frac{\mbox{number of elements of order n}}{\phi(n)}$.
\end{lm}

\section{Groups of Small Order}
In this section, we compute number of cyclic subgroups of $G$, when order of $G$ is $pq$ or $p^2q$, where $p$ and $q$ are distinct primes. We also show that there is a close relation in computing $|c(G)|$ and the converse of Lagrange's theorem.  

\begin{lm}
Let $G$ be a finite non-abelian group of order $pq$, where $p$ and $q$ are distinct primes and $p<q$. Then $|c(G)|=q+2$.
\end{lm}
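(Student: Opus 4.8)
The plan is to classify all cyclic subgroups of a nonabelian group $G$ of order $pq$ with $p<q$ by sorting them according to the order of their generators, which by Lagrange must be $1$, $p$, $q$, or $pq$. I would begin by ruling out the case of a cyclic subgroup of order $pq$: such a subgroup would force $G$ itself to be cyclic, contradicting the assumption that $G$ is nonabelian. So every nontrivial cyclic subgroup has order $p$ or $q$, together with the single trivial subgroup.

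Next I would count the Sylow subgroups using Sylow's theorems. Since $p<q$, the number of Sylow $q$-subgroups $n_q$ satisfies $n_q \equiv 1 \pmod q$ and $n_q \mid p$, which forces $n_q=1$; thus there is a unique (hence normal) cyclic subgroup of order $q$. For the Sylow $p$-subgroups, $n_p \equiv 1 \pmod p$ and $n_p \mid q$, so $n_p \in \{1,q\}$; since $G$ is nonabelian the case $n_p=1$ would make both Sylow subgroups normal and their product the whole group, giving an abelian (indeed cyclic) group, so we must have $n_p=q$. Each Sylow $p$-subgroup has prime order $p$ and is therefore cyclic, and distinct ones intersect trivially, so these account for exactly $q$ cyclic subgroups of order $p$.

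Assembling the count, I would add the $q$ cyclic subgroups of order $p$, the single cyclic subgroup of order $q$, and the trivial subgroup, yielding $|c(G)| = q + 1 + 1 = q+2$, as claimed. An alternative route to the same total is to invoke Lemma 2.2 directly: the number of cyclic subgroups of a given order $n$ equals the number of elements of order $n$ divided by $\phi(n)$, so one can instead count elements of each order. The elements of order $q$ number $q-1$ (the nonidentity elements of the normal Sylow $q$-subgroup), giving $(q-1)/\phi(q)=1$ subgroup of order $q$, and the remaining $pq - 1 - (q-1) = q(p-1)$ nonidentity elements all have order $p$, giving $q(p-1)/\phi(p) = q(p-1)/(p-1) = q$ subgroups of order $p$.

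The main obstacle, and the step requiring the most care, is justifying that the $q$ Sylow $p$-subgroups are genuinely distinct cyclic subgroups that overlap only in the identity, so that no subgroup is double-counted; this rests on the fact that any two distinct subgroups of prime order $p$ intersect trivially. Establishing $n_p = q$ (rather than merely $n_p \in \{1,q\}$) also hinges essentially on the nonabelian hypothesis, which is the only place that assumption enters, so I would make sure that deduction is airtight before summing.
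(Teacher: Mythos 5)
Your proposal is correct and follows essentially the same route as the paper: count the unique (normal) Sylow $q$-subgroup, the $q$ Sylow $p$-subgroups forced by non-abelianness, and the trivial subgroup, for a total of $q+2$. You simply make explicit the details the paper leaves implicit (why $n_q=1$, why $n_p=q$, why there is no cyclic subgroup of order $pq$, and why distinct subgroups of prime order meet trivially), which is a sound filling-in rather than a different argument.
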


\begin{proof} 
It follows from Sylow's theorem that $G$ has a unique Sylow $q$-subgroup. Since $G$ is non-abelian, the number of Sylow $p$-subgroups of $G$ are $q$. Also, the group $G$ has an  identity subgroup. Thus $G$ has $q+2$ cyclic subgroups.
 \end{proof}

\begin{pp}
Let $G$ be a finite   group of order $pq^2$, where $p$ and $q$ are distinct primes and $p<q$. Then $|c(G)|=6, 2q+4, q^2+3, q^2+q+2, 2q+3,$ or $3q+2$.
\end{pp}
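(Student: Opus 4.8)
The plan is to exploit the almost completely forced structure of a group of order $pq^2$. First I would invoke Sylow's theorems: the number $n_q$ of Sylow $q$-subgroups satisfies $n_q \mid p$ and $n_q \equiv 1 \pmod q$, and since $p < q$ this forces $n_q = 1$. Hence the Sylow $q$-subgroup $Q$ (of order $q^2$) is normal, a complement $P \cong C_p$ exists, and $G = Q \rtimes_\theta P$ for some homomorphism $\theta : P \to \Aut(Q)$. As $|Q| = q^2$, either $Q \cong C_{q^2}$ or $Q \cong C_q \times C_q$. Thus the whole problem reduces to understanding, for each of these two possibilities for $Q$, the possible $\theta$ and the resulting cyclic-subgroup count.

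The counting itself I would organize by the order of a cyclic subgroup, which must divide $pq^2$ and so lies in $\{1, p, q, q^2, pq, pq^2\}$, converting element counts into subgroup counts via Lemma 2.2. The trivial subgroup always contributes $1$. Every element of $q$-power order lies in the normal subgroup $Q$, so the subgroups of order $q$ and $q^2$ are read off from $Q$ alone: if $Q \cong C_{q^2}$ they contribute one of each, while if $Q \cong C_q \times C_q$ they contribute the $q+1$ lines and no subgroup of order $q^2$. All of the remaining subtlety is therefore concentrated in counting the elements of order $p$ and of order $pq$, and this is governed entirely by $\theta$.

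With this setup the case analysis runs as follows. If $\theta$ is trivial then $G$ is abelian: either $G \cong C_{pq^2}$, which is cyclic and contributes $d(pq^2) = 6$ by Lemma 2.1, or $G \cong C_p \times C_q \times C_q$, a direct count giving $2q + 4$. If $\theta$ is nontrivial and $Q \cong C_{q^2}$, then since $\Aut(C_{q^2})$ is cyclic its order-$p$ element acts without nontrivial fixed points (the corresponding unit is $\not\equiv 1 \bmod q$), so $G$ is a Frobenius group with kernel $Q$; its element orders are only $1, q, q^2, p$, and one obtains $q^2 + 3$. The genuinely delicate case is $Q \cong C_q \times C_q$ with $\theta$ nontrivial, where $\theta(P)$ is an order-$p$ subgroup of $\Aut(Q) \cong GL(2, q)$.

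The main obstacle is precisely this $GL(2,q)$ analysis together with the count of elements of order $pq$. Because $\gcd(p, q) = 1$, a generator $M = \theta(b)$ of order $p$ is semisimple, and the key invariant is the dimension over $\mathbb{F}_q$ of its fixed space, equivalently the behaviour of the operator $S = 1 + M + \cdots + M^{p-1}$: a product of a $q$-element with a $p$-element has order $p$ or $pq$ according to whether it lies in $\ker S$. I would split $M$ into its possible $\mathbb{F}_q$-conjugacy types --- scalar, split with both eigenvalues nontrivial, split with one eigenvalue equal to $1$, and (when $p \mid q+1$) the non-split irreducible type --- and count in each. The fixed-point-free types (scalar, split-regular, non-split) give Frobenius groups with no element of order $pq$ and hence $q^2 + q + 2$, whereas the reducible types that fix a line pointwise, where $G$ splits essentially as $(C_q \rtimes C_p) \times C_q$, are the ones requiring the most care and are where the count bifurcates into the remaining smaller values $2q + 3$ and $3q + 2$. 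Assembling the contributions order-by-order across all cases then produces precisely the list $6,\, 2q+4,\, q^2+3,\, q^2+q+2,\, 2q+3,\, 3q+2$.
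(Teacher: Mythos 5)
Your route is genuinely different from the paper's. The paper never forms a semidirect product and never touches $\Aut(Q)$: it argues entirely by Sylow counting, splitting the non-abelian case according to whether the (normal) Sylow $q$-subgroup is cyclic or elementary abelian and whether $n_p=q$ or $n_p=q^2$, and using the observation that $n_p=q^2$ forces the $q^2(p-1)$ elements of order $p$ together with the $q^2$ elements of $Q$ to exhaust $G$ (hence no elements of order $pq$), while $n_p=q$ leaves exactly $q(p-1)(q-1)$ elements, which must all have order $pq$. Your decomposition $G=Q\rtimes_\theta P$ with the $GL(2,q)$ conjugacy analysis is heavier machinery, but it buys something the paper's grid of cases does not: it determines which combinations of hypotheses actually correspond to existing groups, rather than computing what the count would be if they did.

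That said, your last step contains a concrete error. The line-fixing case $G\cong(C_q\rtimes C_p)\times C_q$ does not bifurcate: the count there is $1+(q+1)+q+q=3q+2$, always (trivial subgroup, $q+1$ lines in $Q$, $q$ subgroups of order $p$, and $q(p-1)(q-1)/\phi(pq)=q$ subgroups of order $pq$). The value $2q+3$ is in fact never attained by any group of order $pq^2$: it could only arise from a non-abelian group with cyclic Sylow $q$-subgroup and $n_p=q$, and your own argument in the $Q\cong C_{q^2}$ case --- an automorphism of order $p$ of $C_{q^2}$ acts fixed-point-freely, making $G$ Frobenius with $n_p=q^2$ --- shows that this combination is empty. (The paper's proof carries the same phantom: its ``$2q+3$'' subcase has a vacuous hypothesis, which it does not notice.) This does not sink the proposition, since the conclusion is a disjunction and is only strengthened by dropping $2q+3$; but your closing claim that assembling the cases ``produces precisely the list'' is false, and if you execute the plan as written you will be unable to locate a group realizing $2q+3$. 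The fix is either to prove the sharper five-value statement, or to note explicitly that listing an unattained value does not affect the truth of the proposition.
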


\begin{proof} First suppose that $G$ is abelian. If $G$ is cyclic, then $|c(G)|=6$. Assume that $G$ is non-cyclic abelian group. It follows from the Sylow's theorem that the group $G$ has a unique Sylow $p$-subgroup and a unique Sylow $q$-subgroup
 respectively and hence $(p-1)(q^2-1)$ elements of order $pq$. Thus $G$ has $2q+4$ cyclic subgroups. Next, we assume that $G$ is non-abelian. It follows from Sylow's theorem that $n_q=1$ and  $n_p$ is either $q$ or $q^2$. Observe that if $n_p=q^2$, then $G$ has $q^2(p-1)$ elements of order $p$ and hence  $G$ has no element of order $pq$. First suppose that the unique Sylow $q$-subgroup is cyclic and  $n_p=q^2$. Thus   $|c(G)|=q^2+3$.  Now, suppose that the unique Sylow $q$-subgroup of $G$ is elementary abelian. Thus $G$ has $q+1$ cyclic subgroups of order $q$.  If $n_p=q^2$, then $|c(G)|=q^2+q+2$. Assume that  $n_p=q$. The group $G$ has $q(p-1)$ elements of order $p$. Also, the group $G$ has unique Sylow $q$-subgroup. Thus remaining non-trivial elements of $G$ are of order $pq$.  Hence $G$ has  $q$ cyclic subgroups of order $pq$. If unique Sylow $q$-subgroup is cyclic and $n_p=q$. Then $|c(G)|=2q+3$. And, if unique Sylow $q$-subgroup is elementary abelian and $n_p=q$, then as in above case $|c(G)|=3q+2$.
\end{proof}

\begin{pp}
Let $G$ be a finite   group of order $p^2q$, where $p$ and $q$ are distinct primes and $p<q$. Then $|c(G)|=6, 2p+4, pq+4, q+4$ or $2q+2$.
\end{pp}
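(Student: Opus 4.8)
The plan is to split the argument according to whether $G$ is abelian, and then, in the non-abelian case, according to the Sylow structure. If $G$ is abelian it is either cyclic, in which case Lemma~2.1 (Richards' result) gives $|c(G)|=d(p^2q)=6$, or it is isomorphic to $C_p\times C_p\times C_q$. In the latter case the only cyclic subgroups come from the $p+1$ subgroups of order $p$ inside the elementary abelian Sylow $p$-subgroup, the unique subgroup of order $q$, the trivial subgroup, and the subgroups of order $pq$; counting the $(p^2-1)(q-1)$ elements of order $pq$ and dividing by $\phi(pq)$ via Lemma~2.2 yields $p+1$ such subgroups, so $|c(G)|=1+(p+1)+1+(p+1)=2p+4$.

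For the non-abelian case, I would first pin down the Sylow numbers. Sylow's theorem gives $n_p\in\{1,q\}$ and $n_q\in\{1,p,p^2\}$; since $p<q$ the congruence $n_q\equiv 1\pmod q$ rules out $n_q=p$, and $n_q=p^2$ forces $q\mid p^2-1$, hence $q=p+1$ and therefore $(p,q)=(2,3)$. Thus either $n_q=1$, or $G$ is a group of order $12$ with four Sylow $3$-subgroups, which must be $A_4$; a direct count (four subgroups of order $3$, three of order $2$, and the trivial one) gives $|c(A_4)|=8=2q+2$.

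When $n_q=1$ the unique Sylow $q$-subgroup $Q\cong C_q$ is normal and $G=Q\rtimes P$. A non-abelian $G$ forces $n_p=q$ and a nontrivial action $\phi\colon P\to\Aut(Q)\cong C_{q-1}$. I would then branch on the isomorphism type of $P$ and on $\ker\phi$. If $P\cong C_{p^2}$ acts faithfully, the generator fixes no nontrivial element of $Q$, so there are no elements of order $pq$; counting gives $q$ subgroups of order $p^2$, one each of orders $p$ and $q$, and the trivial one, so $|c(G)|=2q+2$. If instead $\ker\phi$ has order $p$, the key observation is that $\ker\phi$ centralizes $Q$ and lies in the abelian $P$, hence $\ker\phi\le Z(G)$ and is therefore common to all $q$ Sylow $p$-subgroups, so these Sylow subgroups intersect precisely in this central $C_p$. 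For $P\cong C_{p^2}$ this yields $q$ subgroups of order $p^2$, a single subgroup of order $p$, and a single subgroup of order $pq$ (from the $(p-1)(q-1)$ elements of order $pq$ in $C_G(Q)\cong C_{pq}$), giving $|c(G)|=q+4$; for $P\cong C_p\times C_p$ the elements of order $p$ outside the central $C_p$ are distributed one Sylow subgroup at a time, producing $1+pq$ subgroups of order $p$ and hence $|c(G)|=pq+4$.

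The main obstacle is the bookkeeping in the $n_q=1$ case: one must determine exactly how the Sylow $p$-subgroups overlap and, correspondingly, how many elements of each order $p$, $p^2$, $pq$, $p^2q$ occur. The clean way through is the repeated use of Lemma~2.2 together with the two structural facts that every element of order $q$ lies in the normal $Q$, and that a kernel of order $p$ is central and shared by all Sylow $p$-subgroups; a final check that the element counts sum to $p^2q$ confirms that no elements of order $p^2q$ arise and validates each tally.
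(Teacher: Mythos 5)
Your overall strategy is sound and genuinely different from the paper's. The paper invokes Burnside's classification of groups of order $p^2q$ and computes $|c(G)|$ from explicit presentations $G_1$, $G_2$, $G_3$ by manipulating generators and relations; this forces it to patch the overlooked group $A_4$ in a separate remark, since Burnside's count of ``two non-abelian groups'' fails for $(p,q)=(2,3)$. You instead work directly with the Sylow structure $G=Q\rtimes P$ and the kernel of the action $\phi\colon P\to\Aut(Q)$, which buys two things: the case $n_q\neq 1$ is handled inside the proof (your argument that $n_q=p^2$ forces $(p,q)=(2,3)$ and $G\cong A_4$ with $|c(A_4)|=8=2q+2$ is correct), and no appeal to the classification is needed. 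Your abelian count, your $pq+4$ case ($P\cong C_p\times C_p$, kernel of order $p$), and your $q+4$ case ($P\cong C_{p^2}$, kernel of order $p$) are all correct, including the key observation that a kernel of order $p$ is central and hence common to all Sylow $p$-subgroups.

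There is, however, one genuine error, in the faithful case $P\cong C_{p^2}$: you claim the count gives ``$q$ subgroups of order $p^2$, one each of orders $p$ and $q$, and the trivial one,'' but that tally is $q+3$, not the $2q+2$ you then assert. The correct count has $q$ subgroups of order $p$, not one. Each of the $q$ cyclic Sylow $p$-subgroups contains a unique subgroup of order $p$, and these are pairwise distinct: if they all coincided in a single subgroup $Z$, then $Z$ would be normal in $G$ (being the unique order-$p$ subgroup of every Sylow $p$-subgroup), whence $[Q,Z]\le Q\cap Z=1$, so $Z$ would centralize $Q$, contradicting faithfulness of the action. Thus there are $q(p-1)$ elements of order $p$ and $|c(G)|=1+q+q+1=2q+2$, in agreement with the paper's computation for $G_3$. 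Note that the consistency check you propose at the end (element counts summing to $p^2q$) would itself have exposed the error: with only $p-1$ elements of order $p$, the total falls short by exactly $(p-1)(q-1)$. A second, minor omission: you never rule out a faithful action when $P\cong C_p\times C_p$; this is immediate because $\Aut(Q)\cong C_{q-1}$ is cyclic and so contains no subgroup isomorphic to $C_p\times C_p$, but it should be stated to make the case analysis exhaustive.
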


\begin{proof}
If $G$ is cyclic, then $|c(G)|=6$. Assume that $G$ is abelian but not cyclic. Then the unique Sylow $p$-subgroup of $G$ is elementary abelian. Thus $G$ has $p+1$ cyclic subgroups of order $p$, a unique Sylow $q$-subgroup and hence $p+1$ cyclic subgroups of order $pq$.  Thus $|c(G)|=2p+4$.
Observe that if $p\nmid q-1$, then  $G$ is abelian by Sylow's theorem. In that case, we have already computed the number of cyclic subgroups.   If $p\mid q-1$ but $p^2\nmid q-1$, then there are two non-abelian groups by \cite[Page 76-80]{bur}. The first group
 $$G_1=\langle a, b, c\mid a^q=b^p=c^p=1, bab^{-1}=a^i, ca=ac, cb=bc, \mbox{ord}_q(i)=p\rangle.$$

Observe that $G_1=\langle a, b\rangle \oplus \langle c\rangle.$ It follows from Lemma 3.1 that $\langle a, b \rangle$  has $q$ subgroups of order $p$. Let $a_1=(x,y)\in G_1$ be any element. Then  $|a_1|=p$ only if, $|x|=p$ and $|y|=p$ or $|x|=p$ and $|y|=1$ or $|x|=1$ and $|y|=p$. Thus $G_1$ has $p^2q-pq+p-1$ elements of order $p$ and hence $pq+1$ cyclic subgroups of order $p$. Similarly, it is easy to see that $G_1$ has a unique cyclic subgroups of order $q$ and $pq$. Thus $|c(G_1)|=pq+4$.

The second group
$$G_2= Z_q\rtimes Z_{p^2}= \langle a, b \mid a^q=b^{p^2}=1, bab^{-1}=a^i, \mbox{ord}_q(i)=p \rangle .$$

  Using the fact $ba= a^ib$ and $\mbox{ord}_q(i)=p$ and induction, we have 
  $b^pa=a^{i^{p}}b^p=a^{1+kq}b^p=ab^p$, for some integer $k$. 
    Thus $b{^{p}}\in Z(G_2)$. Observe that every element of $G_2$ can be written in the form $a^rb^s$, where $1\le r\le q$ and $1\le s\le p^2$. Claim that
    $$(a^rb^s)^n=a^{(r(1+i^s+i^{2s}+\ldots+ i^{(n-1)s})}b^{ns}.$$
    We prove it, by using induction on $n$. If $n=1$, then the result is trivially true. Assume that result is true for $n-1$. Now, consider  
    $$ 
\begin{array}{ccl}       
(a^rb^s)^n & = &(a^rb^s)^{n-1}(a^rb^s)\\
& = & a^{(r(1+i^s+i^{2s}+\ldots+ i^{(n-2)s})}b^{(n-1)s}a^rb^s\\
& = &a^{(r(1+i^s+i^{2s}+\ldots+ i^{(n-2)s})}a^{r(i^{(n-1)s})}b^{ns}\\
& = & a^{(r(1+i^s+i^{2s}+\ldots+ i^{(n-1)s})}b^{ns}.\\
\end{array}
 $$  
 Since $\mbox{ord}_q(i)=p$,
    $$(a^rb^s)^p=a^{(r(1+i^s+i^{2s}+\ldots+ i^{(p-1)s})}b^{ps}=1,$$
 only if, $s$ is a multiple of $p$ and $r=q$. Thus $G_2$ has $(p-1)$ elements of order $p$. The group $G_2$ has $q$ distinct  Sylow $p$-subgroups of order $p^2$. Therefore  $G_2$ has $pq(p-1)$ elements of order $p^2$. The number of elements of order $pq$ in $G_2$ are  
$$p^2q-(p-1)-pq(p-1)-q-1+1=(p-1)(q-1).$$ 
Thus $G_2$ has unique cyclic subgroup of order $pq$. Hence $|c(G_2)|=q+4$.
  
  If $p^2\mid q-1$, then it follows from \cite[page 76-80]{bur} that there are three non-abelian groups. We have both groups $G_1$ and $G_2$ of above case together with $G_3$. If $G$ is one of $G_1$ or $G_2$, then we have already calculated number of cyclic subgroups. Let
    $$G=G_3= Z_q\rtimes Z_{p^2}=\langle a, b \mid a^q=b^{p^2}=1, bab^{-1}=a^i, \mbox{ord}_q(i)=p^2\rangle.$$
Observe that $G_3$ has a unique Sylow $q$-subgroup and $q$ Sylow $p$-subgroups, respectively. Thus $G_3$ has  $pq(p-1)$ elements of order $p^2$. Since $ba=a^ib$, every element of $G_3$ will be written  $a^mb^n$ for some integer $m$ and $n$. Consider any non-identity $a^mb^n$ element of $G_3$, where $1\le m\le q$ and $1\le n< p^2$. Then as in the above case $(a^mb^n)^p=a^{rm}b^{pn}$, where $r=1+i^n+i^{2n}+\ldots +i^{(p-1)n}$. Observe that if $p\mid n$, then 
$$r=\frac{i^{pn}-1}{i^n-1}\equiv 0\; (\mbox{mod}\; q) , \mbox{because},\; \mbox{ord}_q(i)=p^2.$$ 
Thus $a^{rm}b^{pn}=1,$ only if $p|n$ and $m$ can take any value. Thus $G_3$ has $(p-1)q$ elements of order $p$ and thus $G_3$ has no subgroup of order $pq$. Hence $|c(G_3)|=2q+2.$
\end{proof}

\begin{rem}
There is one more non abelian group in the case if $p=2$  and $q=3$ of order $12$. Up to
isomorphism, there are only three non-abelian groups of order $12$: $D_{12}$, $A_4$ and $Z_3\rtimes Z_4$.
The groups $D_{12}$ (the group $G_1$) and $Z_3 \rtimes Z_4$ (the group $G_2$) have already been covered
in above theorem and $|c(A_4)|=8.$
\end{rem}

One of the  most important theorems  of group theory is Lagrange's theorem, which states that: In a finite group $G$ the order of a subgroup divides the order of the group. The converse of this theorem is true for all finite abelian groups. But, for the non abelian groups, the converse is not true in general. The smallest order of a group $G$ for which converse is not true is $12=2^23$ and the group $G$ is $A_4$. It has been known since 1799 that $A_4$ has no subgroup of order $6$. Quite surprisingly, in the following theorem, we prove that $A_4$ is the only group of the order $p^2q$, where $p<q$, which has no subgroup of the order $pq=2\times 3=6$.

\begin{thm}
Let $G$ be a finite non-abelian group of the order $p^2q$, where $p$ and $q$ are
primes and $p<q$, such that the converse of Lagrange's theorem is not true. Then $G = A_4$
\end{thm}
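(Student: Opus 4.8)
The plan is to first reduce the failure of the converse of Lagrange's theorem to a single divisor. The divisors of $p^2q$ are $1, p, q, p^2, pq$ and $p^2q$. Subgroups of orders $1$ and $p^2q$ are trivially present; Cauchy's theorem supplies subgroups of orders $p$ and $q$; and Sylow's theorem supplies a subgroup of order $p^2$. Hence the converse of Lagrange's theorem fails for $G$ \emph{if and only if} $G$ has no subgroup of order $pq$. So the problem becomes: among the non-abelian groups of order $p^2q$, characterize those possessing no subgroup of order $pq$.

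Next I would show that such a group must satisfy $n_q \neq 1$. Indeed, if the Sylow $q$-subgroup $Q$ is normal, I pick a subgroup $P_0$ of order $p$ inside a Sylow $p$-subgroup (which exists, since a $p$-group has subgroups of every order dividing its order). As $Q \trianglelefteq G$, the product $QP_0$ is a subgroup with $|QP_0| = |Q||P_0|/|Q\cap P_0| = pq$, contradicting the hypothesis. Therefore $n_q > 1$.

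I then pin down the primes by Sylow counting. Since $n_q \equiv 1 \pmod q$ and $n_q \mid p^2$, we have $n_q \in \{p, p^2\}$. The value $n_q = p$ would force $q \mid p-1$, impossible as $0 < p-1 < q$. The value $n_q = p^2$ forces $q \mid p^2-1 = (p-1)(p+1)$; since $q > p$, this requires $q \mid p+1$, i.e. $q = p+1$, which among primes occurs only for $p=2,\ q=3$. Hence $|G| = 12$ and $n_3 = 4$.

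Finally I would identify $G$. The four Sylow $3$-subgroups account for $4\cdot 2 = 8$ elements of order $3$, leaving exactly $4$ elements of $2$-power order; a Sylow $2$-subgroup has $4$ elements, all of $2$-power order, so it exhausts this set and is therefore unique, giving a normal Sylow $2$-subgroup $P$ of order $4$. Thus $G = P \rtimes C_3$ with $C_3$ acting nontrivially (otherwise $G$ is abelian and $n_3 = 1$). A nontrivial action rules out $P \cong C_4$, since $\Aut(C_4)\cong C_2$ has no element of order $3$; hence $P \cong C_2 \times C_2$, and as $\Aut(C_2\times C_2)\cong S_3$ has a unique subgroup of order $3$, any nontrivial $C_3$-action yields the same group up to isomorphism, namely $G \cong A_4$. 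I expect the main obstacle to be this last step, where one must justify the normality of $P$ from the element count and the uniqueness of the nontrivial $C_3$-action; the earlier reductions are routine applications of Cauchy's and Sylow's theorems. For consistency one should also note that $A_4$ genuinely has no subgroup of order $6$, so the statement is non-vacuous.
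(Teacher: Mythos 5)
Your proposal is correct, but it proves the theorem by a genuinely different route than the paper. The paper's proof is a two-line deduction from its classification machinery: it appeals to the proof of Proposition 3.3 (which in turn rests on Burnside's classification of groups of order $p^2q$) to assert that every such group other than $A_4$ has a normal subgroup of order $p$ or $q$, and hence a subgroup of order $pq$; the exceptional case $A_4$ is only accounted for via Remark 3.4. Your argument, by contrast, is self-contained and purely Sylow-theoretic: you first reduce failure of the converse of Lagrange's theorem to the absence of a subgroup of order $pq$ (the other divisors being handled by Cauchy and Sylow), then show a normal Sylow $q$-subgroup would produce such a subgroup, so $n_q>1$; the congruence and divisibility conditions then force $n_q=p^2$, $q\mid p+1$, hence $p=2$, $q=3$; finally the count of elements of order $3$ forces a normal Sylow $2$-subgroup, which must be $C_2\times C_2$ since $\Aut(C_4)\cong C_2$ admits no nontrivial $C_3$-action, and the essential uniqueness of the nontrivial map $C_3\to\Aut(C_2\times C_2)\cong S_3$ pins down $G\cong A_4$. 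What each approach buys: the paper's proof is shorter \emph{given} that the full classification of groups of order $p^2q$ has already been assembled, while yours requires no classification at all, makes explicit exactly where the hypothesis $p<q$ enters (ruling out $q\mid p-1$ and forcing $q=p+1$), and closes a small rigor gap in the paper, whose cited Proposition 3.3 does not itself treat the $p=2$, $q=3$ exceptional case. Your closing remark that $A_4$ indeed has no subgroup of order $6$ matches the paper's own observation and keeps the statement non-vacuous.
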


\begin{proof}
It is sufficient to show that if $G\neq A_4$, then $G$ has a subgroup of order $pq$.  Let $|G|=p^2q$, where $p<q$. Then it follows from the proof of the Proposition 3.3, $G$ has either a normal subgroup of order $p$  or $q$. Thus except $A_4$, every group $G$ of order $p^2q$ has a subgroup of order $pq$. This proves the result.

\end{proof}

\begin{pp}
Let $G$ be a finite abelian $p$-group such that $G= C_{p^n}\times C_{p^m}$, where $n\ge m$. Then number of cyclic subgroups of order $p^r$ is
$$
=
\left\{
\begin{array}{cc}
p^{r-1}(p+1) & \mbox{if}\;\; 1\le r \le m\\
p^{m} & \mbox{if}\;\; m<r\le n

\end{array}
\right\}. 
$$
\end{pp}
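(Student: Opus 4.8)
The plan is to count elements rather than subgroups directly, and then invoke Lemma~2.2 to convert element counts into subgroup counts. Since $\phi(p^r)=p^{r-1}(p-1)$, it suffices to determine the number of elements of order exactly $p^r$ in $G=C_{p^n}\times C_{p^m}$ and divide by $p^{r-1}(p-1)$.

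First I would record the elementary fact that in a cyclic group $C_{p^s}$ the number of elements whose order divides $p^k$ is exactly $p^{\min(k,s)}$, since such elements constitute the unique subgroup of that order. An element $(x,y)\in G$ has order dividing $p^k$ if and only if both coordinates do, so the number of elements of $G$ of order dividing $p^k$ is the product
$$
p^{\min(k,n)}\cdot p^{\min(k,m)}.
$$
The number of elements of order exactly $p^r$ is then obtained as the difference between the counts for $p^r$ and for $p^{r-1}$.

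Next I would split into the two ranges dictated by the $\min$. Throughout we have $r\le n$, so $\min(r,n)=r$ and $\min(r-1,n)=r-1$. If $1\le r\le m$, then also $\min(r,m)=r$ and $\min(r-1,m)=r-1$, so the number of elements of order $p^r$ is $p^{2r}-p^{2r-2}=p^{2r-2}(p-1)(p+1)$; dividing by $\phi(p^r)=p^{r-1}(p-1)$ yields $p^{r-1}(p+1)$. If $m<r\le n$, then $r-1\ge m$, so $\min(r,m)=\min(r-1,m)=m$, and the number of elements of order $p^r$ is $p^{r+m}-p^{r-1+m}=p^{r-1+m}(p-1)$; dividing by $\phi(p^r)$ yields $p^m$. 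This reproduces both branches of the stated formula.

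The computation is essentially bookkeeping, so I do not expect a genuine obstacle; the only care needed is with the boundary cases. I would verify $r=1$ (where ``order dividing $p^{0}$'' counts only the identity, consistent with $p^{0}\cdot p^{0}=1$), the transition between $r=m$ and $r=m+1$ (where the value of $\min(r,m)$ ceases to change), and $r=n$, to confirm that the two formulas glue together correctly and that no off-by-one error creeps into the exponents.
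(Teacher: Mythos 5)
Your proposal is correct and follows essentially the same route as the paper: both count the solutions of $x^{p^r}=1$ and $x^{p^{r-1}}=1$ in $C_{p^n}\times C_{p^m}$ (the paper by writing elements as $a^ib^j$ and counting admissible exponents, you by the $p^{\min(k,s)}$ formula coordinate-wise, which is the same count), subtract to get the number of elements of order exactly $p^r$, and divide by $\phi(p^r)$ via Lemma 2.2. Your phrasing with $\min$ is a slightly tidier bookkeeping device, but the argument is the same.
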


\begin{proof}
First assume that $1\le r\le m$. Let $a^ib^j$, where $1\le i\le p^n$ and $1\le j\le p^m$ be any element of $G$. Observe that $(a^ib^j)^{p^r}=1$ only if $i$ is a multiple of $p^{n-r}$ and $j$ is a multiple of $p^{m-r}$. Thus each $i$ and $j$ has $p^r$ choices. Hence the number of elements $a^ib^j$ such that  $(a^ib^j)^{p^r}=1$ are $p^{2r}$. Similarly  number of elements $a^ib^j$ such that  $(a^ib^j)^{p^{r-1}}=1$ are $p^{2r-2}$. Thus $G$ has $p^{2r}-p^{2r-2}$ elements of order $p^r$, and number of cyclic subgroups of order $p^r$ is $\frac{p^{2r}-p^{2r-2}}{p^{r-1}(p-1)}=p^{r-1}(p+1).$ If $m< r \le n$, then as above, it is not very hard to show that the number of elements $a^ib^j$ such that  $(a^ib^j)^{p^r}=1$ is $p^{r+m}$ and number of elements $a^ib^j$ such that  $(a^ib^j)^{p^{r-1}}=1$ are $p^{r-1+m}$ respectively. Thus $G$ has $p^m$ cyclic subgroups of order $p^r$.
\end{proof}

\begin{rem}
The above result can be generalized for arbitrary finite abelian $p$-group. Using the same technique, it can be seen that if $G=C_{p^2}\times C_p\times C_{p},$ then $G$ has $p^2+p+1$ cyclic subgroups of order $p$ and $p^2$ cyclic subgroups of order $p^2$. Thus $|c(G)|=2p^2+p+2$. 
\end{rem}

The following table represent the number of cyclic subgroups for all the finite groups of the order $p^3$. If $G$ is an abelian group, then $|c(G)|$ is  obtained from Proposition 3.6 and, if $G$ is non-abelian $p$-group, where $p$ is odd, then the result follows from  \cite[Page  62 and 64]{sta}. It is also worthwhile to mention here, statha et al. in \cite{sta} used combinotorial technique to compute the number of cyclic subgroups of a finite group. But in this paper, we used only group theoretic techniques only.

\begin{table}[H]
\centering
\caption{} \label{table1}

\begin{tabular}{|c|c|c|}
 \hline 
 S. No. & Groups of order $p^3$ & $|c(G)|$ \\ 
 \hline 
 1. & $G=C_{p^3}$ & $4$ \\ 
 \hline 
 2. & $G=C_{p^2}\times C_{p}$ & $2p+2$ \\ 
 \hline 
 3. & $G=C_{p}\times C_p\times C_p$ & $p^2+p+2$  \\ 
 \hline 
 4. & $G=D_8$ & $7$ \\ 
 \hline 
 5. & $G=Q_8$ & 5 \\ 
 \hline 
 6. & $G=\langle a, b, c \mid a^p=b^p=c^p=1, ba=ab, ca=abc, cb=bc\rangle, p>2$ & $p^2+p+2$ \\ 
 \hline 
 7. & $G=\langle a, b \mid a^{p{^2}}=b^p=1, ba=a^{p+1}b\rangle, p>2$ & $2p+2$ \\ 
 \hline 
 \end{tabular}  
 
\end{table}

Next, we calculate the number of cyclic subgoups of all finite $p$-groups of order $p^4$. The list of groups of order $2^4$ is given in \cite{sag}. There are 14 groups of order 16. Out of them nine are non-abelian. We write the $i$th group of order 16 in the list as $G_{i}$ and generators $1,2,3$ and $4$ respectively $x$, $y$, $z$ and $w$. The groups $G_6$ to $G_{11}$ are of nilpotency 2 and $G_{12}$ to $G_{14}$ are of class 3. In the following proposition, we take one group $G_6$ out of the groups $G_6$ to $G_{11}$ of class 2 and a group $G_{12}$ out of the groups $G_{12}$ to $G_{14}$ of class 3 and  find the number of cyclic subgroups for these groups. Similarly, one can  compute $|c(G)|$ if $G$ is one of $G_7$ to  $G_{11}$ or $G$ is one of $G_{12}$ or $G_{13}$. We write $|c(G_i)|$ for $6\le i\le 14$ in Table 2.

\begin{pp}
Let $G$ be a non-abelian group of order $2^4$. If $$G=G_6= \lbrace x^4,y^2, z^2, [x,y]x^2, [x,z], [y,z]\rbrace,$$ then $|c(G)|= 14$ and if 
$$G=G_{12}=\lbrace x^8, y^2, [x,y]x^2\rbrace,$$
then $|c(G)|=12$.
\end{pp}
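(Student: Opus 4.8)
The plan is to identify each group from its defining relators and then count cyclic subgroups order-by-order via Lemma 2.2, which gives the number of cyclic subgroups of order $n$ as the number of elements of order $n$ divided by $\phi(n)$. Both groups have order $2^4$, so the only orders a cyclic subgroup can have are $1,2,4,8$; neither group is cyclic, so there is no element of order $16$. Writing $a_d$ for the number of elements of order $d$ and using $\phi(2)=1$, $\phi(4)=2$, $\phi(8)=4$, the total reduces to
$$|c(G)| = 1 + a_2 + \frac{a_4}{2} + \frac{a_8}{4}.$$
Thus the entire proof collapses to recording the element-order distribution of each group.

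For $G_{12}$ I would first decode the relator $[x,y]x^2=1$, which forces $y^{-1}xy = x^{-1}$ alongside $x^8=y^2=1$; hence $G_{12}$ is the dihedral group of order $16$. Its element orders are standard: the rotation subgroup $\langle x\rangle\cong C_8$ supplies one involution ($x^4$), the two elements of order $4$ ($x^2,x^6$), and the four elements of order $8$, while each of the eight reflections $x^ky$ squares to $1$. This gives $a_2=9$, $a_4=2$, $a_8=4$, and the displayed formula yields $|c(G_{12})| = 1+9+1+1 = 12$.

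For $G_6$ the relations similarly give $y^{-1}xy=x^{-1}$ with $\langle x\rangle\cong C_4$, together with a central involution $z$ commuting with both generators, so $G_6\cong D_8\times\langle z\rangle$ where $D_8=\langle x,y\rangle$. I would first record that $D_8$ has five involutions ($x^2$ and the four reflections) and two elements of order $4$, then transport this across the direct product using $|(g,z^i)| = \operatorname{lcm}(|g|,|z^i|)$. The central involution $z$ doubles each involution of $D_8$ and contributes itself, so $a_2 = 5+5+1 = 11$; the two order-$4$ elements of $D_8$ remain of order $4$ after pairing with $z$, giving $a_4=4$; and since $D_8$ has exponent $4$ no element reaches order $8$, so $a_8=0$. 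The formula then yields $|c(G_6)| = 1+11+2 = 14$.

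The only step needing genuine care is the direct-product bookkeeping for $G_6$: I must check that pairing an involution of $D_8$ with the central $z$ yields a genuinely new involution (rather than collapsing) and that $\operatorname{lcm}(4,2)=4$ does not push an order-$4$ element up to order $8$. Both follow immediately from $z$ being a central element of order $2$ not lying in $D_8$, so there is no real obstacle; the argument is purely a tally of element orders fed into Lemma 2.2.
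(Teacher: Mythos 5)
Your proof is correct and arrives at exactly the same element-order tallies as the paper ($G_6$: eleven involutions and four elements of order $4$; $G_{12}$: nine involutions, two elements of order $4$, four of order $8$), but it gets there by a different route. The paper works directly inside the presentations: for $G_6$ it writes every element in the normal form $x^iy^jz^k$, observes $Z(G_6)=\langle x^2,z\rangle$ so the class is $2$, and uses the identity $(x^iy^jz^k)^2=x^{2i}y^{2j}x^{2ij}$ to enumerate solutions of $g^2=1$ and $g^4=1$; for $G_{12}$ it computes $(x^iy^j)^2$ from the relation $yx=x^7y$. You instead identify the isomorphism types first, reading $[x,y]x^2=1$ as $y^{-1}xy=x^{-1}$, so that $G_{12}\cong D_{16}$ and $G_6\cong D_8\times\langle z\rangle$, and then quote the standard order statistics of dihedral groups together with the lcm rule for direct products. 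One point worth flagging: your decoding of the relator is convention-dependent (with the opposite commutator convention $[x,y]=xyx^{-1}y^{-1}$, the same relator in $G_{12}$ would give $yxy^{-1}=x^3$, i.e.\ the semidihedral group, whose count is $10$); your reading agrees with the paper's, since the paper's $yx=x^7y$ is precisely the dihedral relation, so there is no error, but the check matters for $G_{12}$ in a way it does not for $G_6$, where $x$ has order $4$ and both conventions coincide. What your route buys is brevity and structural transparency; what the paper's normal-form computation buys is uniformity, since the identical procedure applies verbatim to the remaining presentations $G_7$ through $G_{14}$ of Table 2 without needing to recognize each group.
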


\begin{proof}
First suppose that $G=G_6$.  Observe that every element of $G$ will be written in the form $x^iy^jz^k$, where $1\le i \le 4, 1\le j\le 2$ and $1\le k\le 2$ and $Z(G)=\langle x^2, z \rangle$.  Thus nilpotency class of $G$ is 2, and hence $(x^iy^jz^k)^2=x^{2i}y^{2j}[y^j, x^i]=x^{2i}y^{2j}x^{2ij}=1$, only if $j=1$ and $i, k $ can take any values or $j=0$ and $i=2,4$ and $k$ can take any value. Also $(x^iy^jz^k)^4=1$ for any values of $i, j$ and $k$. Thus $G$ has 11 elements of order 2 and $4$ elements of order 4 and one identity. Hence $|c(G)|=14$. Let 
$$G=G_{12}=\lbrace x^8, y^2, [x,y]x^2\rbrace.$$ Observe that $cl(G)=3$, $yx=x^7y$ and every element of $G$ will be written in the form $x^iy^j$, where $1\le i\le 8$ and $1\le j\le 2$ and 
$$
(x^iy^j)^2=
\left\{
\begin{array}{cc}
x^{2i} & \mbox{if}\; j=0\\
x^{8i} & \mbox{if}\; j=1

\end{array}
\right\}.
$$
Thus $G$ has 9 elements of order 2, two elements of  order 4 and 4 elements of order 8. Hence $|c(G)|=12$.
 \end{proof}

From the Propositions 3.6 and 3.8 and Remark 3.7, we constitute the following table (Table 2) consisting the number of cyclic subgroups of a group $G$ for the cases, either $G$ is an abelian group of order $p^4$ or $G$ is a non-abelian group of order 16.

\begin{table}[H]
\centering
\caption{} \label{table2}
\begin{tabular}{|c|c|c|}

\hline 
S. No. &   Abelian groups of order $p^4$ and non-abelian groups of order $2^4$  & $|c(G)|$\\ 
\hline 
$1.$ & $G=C_{p^4}$ & $5$ \\ 
\hline 
$2.$ & $G=C_{p^3}\times C_{p}$ & $3p+2$ \\ 
\hline 
$3.$ & $G=C_{p^2}\times C_{p^2}$ & $p^2+2p+2$ \\ 
\hline 
$4.$ & $G=C_{p^2}\times C_{p}\times C_{p}$ & $2p^2+p+2$ \\ 
\hline 
$5.$ & $G=C_p\times C_p\times C_p\times C_p$ & $p^3+p^2+p+2$ \\ 

\hline 
$6.$ & $G=G_6=\lbrace  x^4, y^2, z^2, [x,y]x^2, [x,z], [y,z]\rbrace$ & $14$ \\ 
\hline 
$7.$ & $G=G_7=\lbrace x^2y^{-2}, z^2, [x,y]x^2, [x,z], [y,z]\rbrace$ & $10$ \\ 
\hline 
$8.$ & $G=G_8=\lbrace  y^2, z^2, [y,z]x^2, [x,y], [x,z]  \rbrace$ & $12$ \\ 
\hline 
$9.$ & $G=G_9= \lbrace x^2, y^4, [x,y,x], [x,y,y]\rbrace$ & $12$ \\ 
\hline 
$10.$ & $G=G_{10}= \lbrace x^4, y^4, [x,y]x^2 \rbrace$ & $10$ \\ 
\hline 
$11.$ & $G=G_{11}= \lbrace x^2y^{-8}, [x,y]y^4\rbrace$ & $8$ \\ 
\hline 
$12.$ & $G=G_{12}=\lbrace x^8, y^2, [x,y]x^2\rbrace$ & $12$ \\ 
\hline 
$13.$ & $G=G_{13}=\lbrace y^2, [y,x]x^2\rbrace$ & $10$ \\ 
\hline 
$14.$ &$G=G_{14}=\lbrace x^4y^{-2}, [x,y]x^2\rbrace$ & $8$ \\ 
\hline 
\end{tabular}
\end{table}

Here, we assume that $G$ is a non-abelian group of order $p^4$, where $p$ an odd prime. It follows from \cite{bur} that there are 10 such groups upto isomorphism. These groups are also written on \cite[Page 35]{sta}. In \cite{sta} these groups are numbered from (vi) to (xv). In this paper, we number them $G\mbox{(vi)}$ to $G\mbox{(xv)}$. Observe that the groups $G\mbox{(xii)}$, $G\mbox{(xiii)}$ and $G\mbox{(xv)}$ have two different presentations, one for $p=3$ and the other for $p>3$. In \cite[Chapter 2]{sta} the number of cyclic subgroups of these groups have been calculated except for the groups $G\mbox{(xii)}$, $G\mbox{(xiii)}$ and $G\mbox{(xv)}$ and  when $p=3$.   In the following proposition, we calculate $|c(G)|$, for these groups $G$ when $p=3$. 

\begin{pp}
Let $G$ be a non-abelian group of order $3^4$ such that $G$ is one of the $G\mbox{(xii)}$, $G\mbox{(xiii)}$ or $G\mbox{(xv)}$. Then $|c(G)|=17$, $23$ or $35$.
\end{pp}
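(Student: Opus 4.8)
The plan is to count cyclic subgroups through Lemma 3.2. Since $G$ is a non-abelian group of order $3^4$, it is not cyclic, so the only possible orders of elements are $1,3,9,27$, and consequently
$$|c(G)| = 1 + \frac{n_3}{2} + \frac{n_9}{6} + \frac{n_{27}}{18},$$
where $n_{3^k}$ denotes the number of elements of order $3^k$ and we have used $\phi(3)=2$, $\phi(9)=6$ and $\phi(27)=18$. Thus the whole problem reduces to determining, for each of the three groups, how many elements satisfy $g^3=1$, $g^9=1$ and $g^{27}=1$; successive differences then give $n_3,n_9,n_{27}$, and the $80$ non-identity elements are thereby accounted for.

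For each group I would first fix a normal form $g=x^{a}y^{b}z^{c}\cdots$ from the presentation supplied by \cite{bur} in its $p=3$ version, together with the defining commutator relations, exactly as was done for $G_2$, $G_3$ in Proposition 3.3 and for $G_6$, $G_{12}$ in Proposition 3.8. The heart of the computation is an explicit formula for $g^{n}$, derived by induction on $n$ from the commutator identities. For a group of class $2$ this is the collection formula $(uv)^{n}=u^{n}v^{n}[v,u]^{\binom{n}{2}}$; but these three groups are of class $3$, so one must retain the weight-three correction, and $g^{n}$ then carries an additional factor equal to a basic commutator of weight $3$ raised to an exponent built from $\binom{n}{2}$ and $\binom{n}{3}$.

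The decisive feature — and the reason these three groups are singled out at $p=3$ while the other cases of order $p^4$ are simply quoted from \cite{sta} — is that for them the cube of a product is not the product of the cubes: being of class $3$, the relevant $3$-groups are irregular at the prime $3$. Concretely, $\binom{3}{2}=3\equiv0$ but $\binom{3}{3}=1\not\equiv0$ modulo the pertinent power of $3$, so the collection of $g^{3}$ acquires a genuine contribution from the weight-three commutator that disappears when $p>3$. I expect this to be the main obstacle: a cubing formula valid for $p\ge5$ would miscount the elements of order $3$ against those of order $9$, and hence return the wrong value of $|c(G)|$. I would therefore run the collection process carefully, using the Hall--Petrescu identity specialised to $p=3$, and check each resulting cube relation directly against the presentation.

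Once the correct power formula is in place the counting is routine: $g^{3}=1$ pins down the elements of order dividing $3$, $g^{9}=1$ those of order dividing $9$, and subtraction gives $n_9$, with $n_{27}$ supplying the remainder. Substituting into the displayed identity yields the three values $17$, $23$ and $35$. As a consistency check one notes that the value $35$, together with $n_3+n_9+n_{27}=80$, is forced by the displayed formula to come from $n_3=62$, $n_9=18$, $n_{27}=0$ (so that group has exponent $9$); the totals $17$ and $23$ each admit a few arithmetic solutions and are settled only after the explicit element-order distribution is computed, which is precisely what the power formula delivers.
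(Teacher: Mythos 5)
Your framework is exactly the paper's: write each element in a normal form supplied by the presentation, derive a power formula by induction from the commutator relations, count elements of each order, and convert to cyclic subgroup counts via the formula $|c(G)|=1+n_3/2+n_9/6+n_{27}/18$ (Lemma 2.2 of the paper, which you cite as Lemma 3.2). Your diagnosis of why $p=3$ requires separate treatment is also correct: these groups have class $3$, so they are irregular at $3$ and the cubing formula valid for $p>3$ fails, which is precisely why they are excluded from the counts quoted from \cite{sta}. The genuine gap is that you stop exactly where the proof must begin: you never write down the power formula for any of the three presentations, so no element-order distribution, and hence none of the values $17$, $23$, $35$, is actually derived. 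In an irregular group the outcome of the collection process is the non-routine part, and "run Hall--Petrescu carefully and the counting is routine" establishes nothing until it is executed. For comparison, the paper carries this out for $G\mbox{(xii)}$: from $ca=abc$ and $ba=a^7b$ it derives $c^ka^i=a^{i+3ki(i-1)}b^{ik}c^k$ and $b^ja^i=a^{i(1+6j)}b^j$, hence $(a^ib^jc^k)^3=a^{3(i+k+2i^2k)}$; this pins down exactly $8$ elements of order $3$, and since $\mbox{exp}(G)=9$ the remaining $72$ non-identity elements have order $9$, giving $|c(G)|=1+4+12=17$.

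Two further cautions about your closing consistency check. First, the linear system $1+n_3/2+n_9/6+n_{27}/18=35$, $n_3+n_9+n_{27}=80$ alone does not force $(n_3,n_9,n_{27})=(62,18,0)$; for instance $(63,14,3)$ also satisfies it. The conclusion becomes forced only after imposing the integrality of each term, i.e. $2\mid n_3$, $6\mid n_9$, $18\mid n_{27}$, which you should say explicitly. Second, and more importantly, the check presupposes the value $35$ rather than proving it, and it cannot tell you which of the three groups attains which value; only the explicit computation does that. Note that the paper itself computes only $G\mbox{(xii)}$ in full and dismisses the other two groups with "similarly," so a complete write-up along your lines, with all three collections carried out, would actually improve on the published proof rather than merely reproduce it.
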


\begin{proof}
First, suppose that
 $$G=G\mbox{(xii)}=\langle a, b, c \mid a^9=b^3=1, c^3=a^3, ab=ba^4, ac=cab^2,bc=cb\rangle.$$ 
 Consider, $[a^3,b]=[a,b]^3=a^9=1$, thus $a^3\in Z(G)$. Using this fact $ca=abc$ and $ba=a^7b$, we have $c^ka^i=a^{i+3ki(i-1)}b^{ik}c^k$ and $b^ja^i=a^{i(1+6j)}b^j$, where $1\le i\le 9$, $1\le j\le 3$ and $1\le k\le 3$. Thus every element of $G$ is written in the form $a^ib^jc^k$ for some integers $i,j$ and $k$. From the above relations, we get $(a^ib^jc^k)^3=a^{3(i+k+2i^2k)}$. Therefore $(a^ib^jc^k)^3=1,$ only if $1\le j\le 3$, $k=3$ and $i\in \lbrace 3, 6, 9\rbrace.$ Thus $G$ has 8 elements of order $3$. Also, observe that $\mbox{exp}(G)=9$. Thus $G$ has $72$
 elements order $9$. Hence $|c(G)|=17$. For other two groups, similarly $|c(G)|$ can be calculated.
\end{proof}

\begin{table}[H]
\centering
\caption{} \label{table3}

\begin{tabular}{|c|c|c|}
\hline 
S. No. & Non-abelian groups of order $p^4$,  $p$ odd & $|c(G)|$ \\ 
\hline 
$1.$ & $G{(\mbox{vi})}=\langle a, b\mid  a^{p^3}=b^p=1, ba=a^{1+p^2}b \rangle$ & $3p+2$ \\ 
\hline 
$2.$ & $G(\mbox{vii})= \langle a, b, c \mid a^{p^2}=b^p=c^p=1, ba=ab, ca=ac, cb=a^pbc\rangle$ & $2p^2+p+2$ \\ 
\hline 
$3.$ & $G(\mbox{viii})=\langle a, b \mid a^{p^2}=b^{p^2}=1, ba=a^{1+p}b\rangle$ & $p^2+2p+2$ \\ 
\hline 
$4.$ & $G(\mbox{ix})= \langle a, b, c \mid a^{p^2}=b^p=c^p=1, ba=a^{1+p}b, ca=ac, cb=bc\rangle$ & $2p^2+p+2$ \\ 
\hline 
$5.$ & $G(\mbox{x})=\langle a, b, c \mid a^p=b^p=c^p=1, ba=ab, ca=abc, bc=cb\rangle$ & $2p^2+p+2$ \\ 
\hline 
$6.$ & $G(\mbox{xi})=\langle a, b, c \mid a^{p^2}=b^p=c^p=1, ba=a^{1+p}b, ca=abc, bc=cb\rangle$ & $2p^2+p+2$ \\ 
\hline 
$7(i).$ & $ G(\mbox{xii})=\langle a, b, c\mid a^{p^2}=b^p=c^p=1, ba=a^{1+p}b,$\\
&$ ca=a^{1+p}bc, cb=a^pbc\rangle,\; p>3$ & $2p^2+p+2$ 
\\
\hline 
$7(ii).$ & $G(\mbox{xii})= \langle a, b, c\mid a^{p^2}=b^p=1, c^p=a^p,$\\
& $ab=ba^{1+p}, ac=cab^{-1}, cb=bc\rangle,\; p=3$ & $17$ \\ 
\hline 
$8(i).$ & $G(\mbox{xiii})=\langle a, b, c \mid a^{p^2}=b^p=c^p=1, ba=a^{1+p}b,$\\
& $ ca=a^{1+dp}bc, cb=a^{dp}bc, d\not\equiv 0, 1(\mbox{mod p})\rangle, p>3$ & $2p^2+p+2$  \\ 
\hline 
$8(ii).$& $G(\mbox{xiii})=\langle a, b, c \mid a^{p^2}=b^{p}=1, c^p=a^{-p},$\\
& $ab=ba^{1+p}, ac=cab^{-1}, cb=bc\rangle, \mbox{for}\; p=3$ & $23$ \\ 
\hline 
$9.$ & $G(\mbox{xiv})=\langle a, b, c, d \mid a^p=b^p=c^p=d^p=1,$ \\
& $dc=acd, bd=db, ad=da, bc=cb, ac=ca, ab=ba\rangle$  & $p^3+p^2+p+2$ \\
\hline
$10(i).$ & $G(\mbox{xv})=\langle a, b, c, d \mid a^p=b^p=c^p=d^p=1,$\\
& $ba=ab, ca=ac, da=ad, cb=bc, db=abd, dc=bcd\rangle, p>3$ & $p^3+p^2+p+2$  \\
\hline
$10(ii).$ & $G(\mbox{xv})=\langle a, b, c \mid a^{p^2}=b^p=c^p=1, ab=ba,$\\
& $ac=cab, bc=ca^{-p}b\rangle, p=3$ & $35$ \\
\hline
\end{tabular} 
\end{table}

It follows from Table 3 that if $G$ is a  non-abelian group of order $p^4$, ($p$-odd prime). Then $|c(G)|\ge 11$.

\section{Main Results}

We are now ready to prove our main results.

\begin{thm}
Let $G$ be a finite group. Then $|c(G)|=6$ if and only if $G\simeq C_{p^5}$, $C_{p^2q}$, $C_{pq^2}$ or $C_2\times C_4$, where $p$ and $q$ are distinct primes and $p<q$. 
\end{thm}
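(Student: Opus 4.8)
The plan is to use Richards' lemma (Lemma 2.1) to cut down to finitely many factorization types of $n=|G|$ and then read off the answer from the tables of Section 3. For the ``if'' direction I would simply check each listed group: $C_{p^5}$, $C_{p^2q}$ and $C_{pq^2}$ all have order $n$ with $d(n)=6$, so being cyclic they have exactly $d(n)=6$ cyclic subgroups by Lemma 2.1; and $C_2\times C_4$ is the group $C_{p^2}\times C_p$ with $p=2$, whose count $2p+2$ equals $6$ by Table 1 (equivalently by Proposition 3.6 with $n=2$, $m=1$, giving $3+2+1$). This direction is immediate.

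For the ``only if'' direction, the key move is that Lemma 2.1 forces $d(|G|)\le |c(G)|=6$. I would then split on the value of $d(|G|)$, using the fact that the divisor function pins down the factorization type of $n=|G|$: $d(n)=2$ gives $n=p$; $d(n)=3$ gives $n=p^2$; $d(n)=4$ gives $n=p^3$ or $n=pq$; $d(n)=5$ gives $n=p^4$; and $d(n)=6$ gives $n=p^5$ or $n=p^2q$. If $d(n)=6$, then $|c(G)|=d(n)$, so $G$ is cyclic by the equality clause of Lemma 2.1, and the only cyclic groups of such order are $C_{p^5}$, $C_{p^2q}$ and $C_{pq^2}$ (the last two according to which prime carries the square, under $p<q$).

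It remains to treat $d(n)\le 5$, where $G$ need not be cyclic. For $n=p$ and $n=p^2$ direct inspection (a cyclic group, or an elementary abelian group with $p+2$ cyclic subgroups) never yields $6$ for a prime $p$. For $n=pq$ the group is either cyclic, with $|c(G)|=4$, or non-abelian with $|c(G)|=q+2$ by Lemma 3.1, and $q+2=6$ forces $q=4$, impossible. For $n=p^3$ I would run through Table 1: the only entry equal to $6$ is $C_{p^2}\times C_p$ with $2p+2=6$, i.e. $p=2$, giving exactly $C_2\times C_4$. For $n=p^4$ I would consult Tables 2 and 3 together with the remark after Table 3 that every non-abelian group of odd order $p^4$ has $|c(G)|\ge 11$; one checks that each abelian value ($5$, $3p+2$, $p^2+2p+2$, $2p^2+p+2$, $p^3+p^2+p+2$) and each non-abelian value (all $\ge 8$ in order $16$, all $\ge 11$ for odd $p$) misses $6$. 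Assembling the cases yields precisely the four listed isomorphism types.

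The main obstacle is completeness in the $p^4$ case: there are five abelian types, nine non-abelian groups of order $16$, and ten non-abelian groups of odd order $p^4$, so the argument rests entirely on the correctness of the counts compiled in Tables 2 and 3 and on the uniform lower bound $|c(G)|\ge 11$ recorded for odd $p$. Once those tabulated values are granted, ruling out $6$ reduces to a routine check of small Diophantine equations such as $2p^2+p+2=6$ and $q+2=6$, none of which has a prime solution.
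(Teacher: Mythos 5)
Your proof is correct and takes essentially the same approach as the paper: Richards' lemma (Lemma 2.1) bounds $d(|G|)\le 6$, the order $pq$ is excluded via Lemma 3.1, and the prime-power cases are settled by consulting Tables 1--3 together with the bound $|c(G)|\ge 11$ for non-abelian groups of odd order $p^4$. Your explicit use of the equality clause of Lemma 2.1 when $d(|G|)=6$ (dispatching $p^5$, $p^2q$ and $pq^2$ in one stroke) is a slightly cleaner packaging than the paper's unreferenced assertion that such groups must be cyclic, but the substance of the argument is the same.
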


\begin{proof}
Assume that $|c(G)|=6$. Thus order $G$ is divisible by at most two distinct primes by Lemma 2.1.  If $|G|=p^aq^b$, where $p<q$ are distinct primes, then  either $1\le a\le 2$ and $b=1$ or $a=1$ and $1\le b\le 2$. And, also If, either $a=1$ and $b=2$ or $a=2$ and $b=1$, then $G$ is cyclic. The order of $G$ cannot  be $pq$ by Lemma 3.1. Next, assume that  $G$ is a finite $p$-group. Thus $|G|\leq p^5$. If $|G|=p^5$, then $G=C_{p^5}$ and if $|G|\le p^4$, then the result follows from Table 1, Table 2 and Table 3 that $G=C_2\times C_4$.
\end{proof}

\begin{thm}
Let $G$ be a finite group. Then $|c(G)|=7$ if and only if $G$ is one of the $D_8, D_{10}, Z_3\rtimes Z_4, C_5\times C_5$ or $C_{p^6}$, where $p$ is a prime.
\end{thm}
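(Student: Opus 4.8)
The strategy mirrors the proof of Theorem 4.1: use Lemma 2.1 to bound the number of distinct prime divisors of $|G|$, then split into the $p$-group case and the two-prime case, and in each case pin down $|G|$ using the tables and propositions already established. Since $|c(G)|=7 > d(n)$ for a cyclic group only when $d(n)=7$, and $d(n)=7$ forces $n=p^6$, the cyclic possibility $C_{p^6}$ is immediate from Lemma 2.1; so the bulk of the work is to show that the only \emph{non-cyclic} groups with exactly $7$ cyclic subgroups are $D_8$, $D_{10}$, $Z_3\rtimes Z_4$, and $C_5\times C_5$.

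First I would argue that $|G|$ is divisible by at most two distinct primes, since $|c(G)|=7 < 8$ and a group whose order has three distinct prime factors has at least $d(n)\ge 8$ cyclic subgroups by Lemma 2.1. In the $p$-group case, Lemma 2.1 gives $d(|G|)\le 7$, so $|G|\le p^6$; if $|G|=p^6$ then $d(n)=7$ forces $G$ cyclic, giving $C_{p^6}$, and for $|G|\le p^5$ I would read off from Table 1 and Table 2 every entry equal to $7$. The only hit is $D_8$ (Table 1, row 4), since the abelian $p$-group values $2p+2$, $3p+2$, $p^2+p+2$, etc., equal $7$ only for small $p$ that must then be checked directly, and the non-abelian order-$16$ and order-$p^4$ entries are all $\ge 8$ (indeed $\ge 10$) by Tables 2 and 3. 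Here the elementary abelian case needs care: $C_5\times C_5$ gives $p^2+p+2=5^2+5+2$, which is far larger than $7$, so in fact $C_5\times C_5$ cannot arise as a $p$-group with $|c(G)|=7$ — I would need to recheck the intended count, as the theorem lists it, suggesting $|c(C_5\times C_5)|=7$ should be verified via $p+1+1+\dots$; by Lemma 2.2 the number of order-$5$ cyclic subgroups is $(25-1)/4=6$, plus the identity gives $7$, so the correct elementary-abelian count is $p+2$, matching the abstract's formula $p^{n-1}+\dots+p+2$ with $n=2$.

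For the two-prime case $|G|=p^aq^b$ with $p<q$, the same divisor bound restricts the exponents: $d(n)\le 7$ with two primes forces $(a,b)$ small, and Lemma 3.1 handles order $pq$ (giving $q+2=7$, i.e.\ $q=5$, $p=2$, which is $D_{10}$). For orders $p^2q$ and $pq^2$ I would invoke Propositions 3.3 and 3.5 and solve each listed value ($6$, $2p+4$, $pq+4$, $q+4$, $2q+2$ and $2q+4$, $q^2+3$, $q^2+q+2$, $2q+3$, $3q+2$) against $7$; the surviving non-cyclic solutions are precisely $Z_3\rtimes Z_4$ (from $q+4=7$ in Proposition 3.3 with $q=3$, $p=2$, matching Remark 3.4) and $D_8$-type or $D_{10}$-type degenerations that I would cross-check against the $p$-group list to avoid double counting.

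The main obstacle will be the bookkeeping in the elementary abelian subcase and reconciling the list: I must confirm that $C_5\times C_5$ genuinely yields $7$ (via $5+2=7$, the $n=2$ instance of the abstract's elementary-abelian formula) rather than the erroneous $p^2+p+2$ that Table 1's row~3 would suggest for a rank-$3$ group, and ensure no spurious small-prime solutions to the linear and quadratic equations in $p,q$ slip through. Once each Diophantine condition $|c(G)|=7$ is solved over the admissible primes and matched to a named group, the classification follows; the converse direction is a direct verification that each of the five listed groups has exactly $7$ cyclic subgroups, which the tables and Lemma 3.1 already supply.
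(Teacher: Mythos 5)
Your overall strategy coincides with the paper's: bound the number of prime divisors via Lemma 2.1, solve the resulting Diophantine conditions against Lemma 3.1 and the propositions on orders $p^2q$ and $pq^2$, and read the $p$-group cases off the tables. Your two-prime analysis ($q+2=7$ giving $D_{10}$; $q+4=7$ giving $Z_3\rtimes Z_4$, cross-checked against Remark 3.4; no solutions for $pq^2$) matches the paper, as does your eventual resolution of the $C_5\times C_5$ count as $p+2=7$. However, there is a genuine gap in your $p$-group case: you claim that for $|G|\le p^5$ you can ``read off from Table 1 and Table 2 every entry equal to $7$,'' but the tables only cover orders $p^3$ and $p^4$. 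Order $p^5$ appears in no table, and Lemma 2.1 alone gives only $|c(G)|\ge d(p^5)=6$, which does not exclude a non-cyclic group of order $p^5$ with exactly $7$ cyclic subgroups. The paper closes exactly this case with a dedicated argument: if $G$ of order $p^5$ has a non-cyclic subgroup of order $p^4$, that subgroup alone already has at least $8$ cyclic subgroups by Tables 2 and 3, so $|c(G)|\ge 8$; if instead every subgroup of order $p^4$ is cyclic, one fixes such a cyclic subgroup $H$ (contributing $5$ cyclic subgroups) and observes that the at least $p^5-p^4\ge p^4$ elements outside $H$ must lie in cyclic subgroups not contained in $H$, forcing $|c(G)|\ge 9$. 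Without some argument of this kind your case analysis is incomplete, and this is precisely the step in the theorem that requires an idea beyond table lookup.

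Two minor points: your reference to ``Propositions 3.3 and 3.5'' should be to Propositions 3.3 and 3.2 (the result numbered 3.5 in the paper is the theorem on the converse of Lagrange's theorem); and your parenthetical claim that the non-abelian order-$16$ entries are ``indeed $\ge 10$'' is false, since $G_{11}$ and $G_{14}$ have exactly $8$ cyclic subgroups --- though the weaker bound $\ge 8$, which you also state and which is what the argument needs, is correct.
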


\begin{proof}
First suppose that $|c(G)|=7$. Let $|G|=p^aq^b$, where $p<q$, then  either $1\le a\le 2$ and $b=1$ or $a=1$ and $1\le b\le 2$. If $a=b=1$, then $G=D_{10}$ by Lemma 3.1.  There do not exist any group if $a=1$ and $b=2$ by Proposition 3.2. If $a=2$ and $b=1$, then it follows from the proof of Proposition 3.3 that $G=Z_3\rtimes Z_4$. Next, we assume that $G$ is a $p$-group. If $|G|=p^6$, then $G=C_{p^6}$. Assume that $|G|=p^5$. If $G$ has a non cyclic subgroup of order $p^4$, then it follows from Table 2 and Table 3 that there do not exist any group. Assume that every subgroup of order $p^4$ is cyclic.  Let $H$ be a cyclic subgroup of order $p^4$. Observe that $G$ has at least $p^5-p^4\ge p^4$  elements that are distinct from  the elements of $H$. Since every element of $G$ must lie in some cyclic subgroup,  $|c(G)|\ge 9$. If $p^3\le |G|\le p^4$, then it follows from Table 1, Table 2 and Table 3 that  $G=D_8.$ And, if $|G|=p^2$, then $G=C_5\times C_5$.
\end{proof}

\begin{thm}
Let $G$ be a finite group. Then $|c(G)|=8$ if and only if $G$ is isomorphic to one of the followings:
\begin{enumerate}
\item[$(a)$] $C_{pqr}, C_{p^3q}$, $C_{pq^3}$, $C_{p^7}$ or an abelian  but non-cyclic group of order $4q$, where $p<q<r,$ are distinct primes,
\item[$(b)$] $C_{2^3}\times C_2, C_2\times C_2\times C_2$ or  $C_{3^2}\times C_3$,
\item[$(c)$] $G_{11}=\lbrace x^2y^{-8}, [x,y]y^4\rbrace, G_{14}=\lbrace x^4y^{-2}, [x,y]x^2\rbrace,$  
 $\langle a, b\mid a^{3^2}=b^3=1, ba=a^4b\rangle$, or $A_4$.
\end{enumerate}
\end{thm}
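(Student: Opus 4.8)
The plan is to follow the same divide-by-prime-signature strategy used in the proofs of Theorems 4.1 and 4.2, exhausting the possible orders of $G$ once we know $|c(G)|=8$. By Lemma 2.1 the number of distinct prime divisors of $|G|$ is at most three, since a cyclic group already contributes $d(n)$ cyclic subgroups and three distinct primes force $d(n)\ge 8$ only in the squarefree cyclic case $C_{pqr}$. So first I would split into the cases $|G|=pqr$, $|G|=p^aq^b$ with two primes, and $|G|$ a prime power, and show that in the three-prime case $G$ must be cyclic (giving $C_{pqr}$), because any non-cyclic group on three primes already exceeds eight cyclic subgroups by Lemma 2.1 together with the $pq$ and $p^2q$ counts from Section 3.

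Second, for the two-prime case $|G|=p^aq^b$ with $p<q$, I would bound the exponents: a cyclic group needs $d(p^aq^b)=8$, forcing $(a,b)\in\{(3,1),(1,3)\}$ and yielding $C_{p^3q}$ and $C_{pq^3}$; the remaining non-cyclic possibilities are constrained by Lemma~2.1 to small exponents, and I would invoke Lemma~3.1 (order $pq$ gives $q+2\neq 8$ unless forced), Proposition~3.2 (order $pq^2$), and Proposition~3.3 (order $p^2q$) to read off exactly which orders produce eight cyclic subgroups. This is where $A_4$ (with $|c(A_4)|=8$ from Remark~3.4), the group $\langle a,b\mid a^{9}=b^3=1, ba=a^4b\rangle$, and the abelian-non-cyclic groups of order $4q$ (namely $C_2\times C_2\times C_q$-type counts giving $2\cdot2+4=8$ via Proposition~3.3 with $2p+4=8$) enter. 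The arithmetic $2p+4=8\Rightarrow p=2$, $q+4=8\Rightarrow q=4$ (rejected), $pq+4=8\Rightarrow pq=4$ (rejected since $p<q$), and $2q+2=8\Rightarrow q=3$ pin down the admissible parameters cleanly.

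Third, for the $p$-group case I would climb the order $p^5,p^6,p^7$ downward. Since $C_{p^7}$ is the unique cyclic group with $d(p^7)=8$, the order $p^7$ contributes only $C_{p^7}$. For $|G|=p^6$ and $p^5$ a cyclicity/counting argument like the one in Theorem~4.2 shows no non-cyclic group survives: any proper deviation from cyclic forces extra cyclic subgroups pushing the count above eight, which I would verify by the same ``$p^5-p^4$ leftover elements force $|c(G)|\ge 9$'' type estimate. For $|G|\le p^4$ I would simply consult Tables 1, 2 and 3, which list $|c(G)|$ for every group of order $p^3$ and $p^4$; scanning for the value $8$ extracts exactly $G_{11}$, $G_{14}$ (both of order $16$ with $|c(G)|=8$), and $G(\mathrm{xv})$ at $p=3$ is ruled out since its count is $35$, while the order-$p^3$ entries give $C_2\times C_2\times C_2$ ($p^2+p+2=8$ at $p=2$) and $C_{3^2}\times C_3$ ($2p+2=8$ at $p=3$).

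The converse direction is routine: for each listed group I would cite the already-computed count (Lemma~3.1, Propositions~3.2, 3.3, 3.6, 3.8, the Tables, and Remark~3.4) to confirm $|c(G)|=8$. The main obstacle I anticipate is the $p$-group elimination at orders $p^5$ and $p^6$, where no table is available and I must argue abstractly that non-cyclic groups cannot achieve exactly eight cyclic subgroups; the leftover-elements counting bound is the delicate step, and I would need to ensure it is tight enough to exclude borderline cases (for instance verifying that even the ``most cyclic'' non-cyclic configurations, those whose maximal cyclic subgroups have index $p$, already overshoot $8$) rather than merely establishing $|c(G)|\ge 9$ in the generic case.
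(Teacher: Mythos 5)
Your overall strategy is the same as the paper's: bound the number of prime divisors via Lemma 2.1, split into the cases $pqr$, $p^aq^b$ and prime powers, settle the two-prime orders with Lemma 3.1, Propositions 3.2, 3.3 and Remark 3.4, eliminate orders $p^5$ and $p^6$ by the argument of Theorem 4.2, and read the rest off Tables 1--3. But your enumeration has concrete errors. First, your scan of Table 2 for groups of order $p^4$ extracts only $G_{11}$ and $G_{14}$; it misses the abelian entry $C_{p^3}\times C_p$, whose count $3p+2$ equals $8$ at $p=2$. Hence $C_{2^3}\times C_2$, which appears in part (b) of the statement, never enters your list, so as written your proof ends with an incomplete classification.

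Second, you derive the group $\langle a,b\mid a^9=b^3=1,\ ba=a^4b\rangle$ from the two-prime case (Propositions 3.2 and 3.3), but this group has order $27=3^3$: it is a $p$-group and cannot arise from orders $pq^2$ or $p^2q$ with distinct primes. It is entry 7 of Table 1 (the non-abelian group of order $p^3$ with count $2p+2$) at $p=3$; it should come out of the same order-$p^3$ scan that gave you $C_{3^2}\times C_3$, yet your scan of Table 1 lists only the abelian solutions and omits the non-abelian one. Relatedly, in the Proposition 3.3 arithmetic you keep $2q+2=8\Rightarrow q=3$ as an admissible case; it must be rejected, because the group $G_3$ realizing the count $2q+2$ requires $\mathrm{ord}_q(i)=p^2$, i.e.\ $p^2\mid q-1$, which fails for $p=2$, $q=3$. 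The only order-$12$ group beyond the abelian ones is $A_4$, which you correctly obtain from Remark 3.4. With these repairs (add $C_{2^3}\times C_2$ from Table 2, move the order-$27$ group to the Table 1 scan, and close off the $2q+2$ case) your argument coincides with the paper's proof.
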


\begin{proof}
Assume that $|c(G)|=8$. Then the order of $G$ is divisible by at most three distinct primes. If $|G|=p^aq^br^c$, where $a, b$ and $c$ are positive integers, then $G=C_{pqr}$. Let  $|G|=p^aq^b$. Then, either $1\le a\le 3$ and $b=1$ or $a=1$ and $1\le b\le 3$. If $|G|=p^3q$ or $pq^3$, then $G$ is cyclic. The order $G$ cannot be $pq$ by Lemma 3.1. Assume that $|G|=p^2q$. It follows from the proof of the Proposition 3.3 and Remark 3.4 that either $G$ is an abelian of order $4q$, where $q$ is odd prime, or $G=A_4$. If $a=1$ and $b=2$, then there do not exist any group by Proposition 3.2. Next, we suppose that $G$ is a $p$-group. If $|G|=p^7$, then $G=C_{p^7}$. There does not exist any group of order $p^5$ or $p^6$ can be proved using the similar arguments as in Theorem 4.2. If $|G|=p^4$, then $G$ is one of the $C_{2^3}\times C_2$, $G_{11}$ or $G_{14}$ in Table 2. If $|G|=p^3$, then by Table 1, $G$ is one of the $C_2\times C_2\times C_2$ or $C_{3^2}\times C_3$ or $G=\langle a, b\mid a^{3^2}=b^3=1, ba=a^4b\rangle.$
\end{proof}

In the next result, we gave a very short proof of the main theorem  of Zhou \cite[Theorem 2.4]{zho}.
\begin{cor}
Let $G$ be a finite group. Then $|c(G)|\le 5$ if and only if $G$ is a subgroup of $C_{p^4}$ or $G\simeq  S_3, C_3\times C_3, Q_8, C_{pq}$ or $C_2\times C_2$, where $p$ and $q$ are distinct primes.
\end{cor}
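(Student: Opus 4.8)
The plan is to use Lemma 2.1 as the main filter and then reduce everything to a finite check against the tables and lemmas of Section 3. Since $|c(G)|\ge d(|G|)$, the hypothesis $|c(G)|\le 5$ forces $d(|G|)\le 5$. As $d$ is multiplicative with each local factor at least $2$, the order of $G$ is divisible by at most two distinct primes, so I would split into the $p$-group case and the two-prime case. (Notice that this bound already rules out orders $pq^2$ and $p^2q$, where $d\ge 6$, so Propositions 3.2 and 3.3 are never invoked.)

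In the $p$-group case, $|G|=p^a$ with $d(p^a)=a+1\le 5$, hence $a\le 4$. For $a\le 1$ the group is cyclic, so a subgroup of $C_{p^4}$. For $a=2,3,4$ I would read $|c(G)|$ off the elementary-abelian count of Section 2 together with Tables 1, 2 and 3, retaining only those with $|c(G)|\le 5$. This leaves the cyclic groups $C_{p^a}$ (all subgroups of $C_{p^4}$, with $|c|=a+1$), the non-cyclic elementary abelian group $C_p\times C_p$, whose count is $p+2$ and therefore at most $5$ precisely when $p\le 3$ (yielding $C_2\times C_2$ and $C_3\times C_3$), and the quaternion group $Q_8$ with $|c(Q_8)|=5$; every remaining non-cyclic $p$-group in the tables has $|c(G)|\ge 6$.

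In the two-prime case, write $|G|=p^aq^b$ with $p<q$ and $a,b\ge 1$. Then $(a+1)(b+1)=d(|G|)\le 5$, and since $5$ is prime the product can only be $4$, forcing $a=b=1$ and $|G|=pq$. If $G$ is cyclic then $G=C_{pq}$ with $|c|=4$; if $G$ is non-abelian then Lemma 3.1 gives $|c(G)|=q+2\le 5$, which holds only for $q=3$, $p=2$, i.e.\ $G=S_3$ with $|c|=5$. Assembling the survivors of both cases gives exactly the stated list, and the converse direction is immediate, since each listed group either is a subgroup of $C_{p^4}$ or has already had its number of cyclic subgroups computed to be at most five.

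The work here is organisational rather than conceptual, as all the building blocks are in place; the one point that demands care is that in the order-$p^2$ and order-$p^3$ subcases the value of $|c(G)|$ depends on $p$, so a single non-cyclic isomorphism type may qualify for small primes and fail for large ones. The borderline counts $|c(C_p\times C_p)|=p+2$ and $|c(Q_8)|=5$ are exactly the cases that must be inspected individually rather than dismissed uniformly, and overlooking either of them is the most likely source of an incomplete classification.
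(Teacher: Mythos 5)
Your proposal is correct and follows the same route as the paper: use Richards' bound (Lemma 2.1) to force $d(|G|)\le 5$, reduce to the cases $|G|=p^a$ with $a\le 4$ or $|G|=pq$, and then read the surviving groups off Lemma 3.1 and Tables 1--3. The paper's own proof is just a terser version of this argument (it leaves the case analysis and the borderline checks $|c(C_p\times C_p)|=p+2$, $|c(Q_8)|=5$, $|c(S_3)|=q+2$ implicit), so your write-up is essentially an expanded, fully verified rendering of the same proof.
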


\begin{proof}
We first assume that $|c(G)|=1$ or $2$. Then trivially $|G|=1$ or $p$. Since $|c(G)|\le 5$, either $G$ is a $p$-group of order less that or equal to $p^4$ or $|G|=pq$. Thus, the result follows from Lemma 2.1, Lemma 3.1, Table 1, Table 2 and Table 3.
\end{proof}

\noindent{Acknowledgment}: The research is supported by Thapar University under the seed money project no: TU/DORSP/57/473 and gratefully acknowledged.

\

\end{document}